\makeatletter \@addtoreset{equation}{section} \makeatother
\newtheorem{theorem}{Theorem}
\theoremstyle{definition}
\theoremstyle{remark}
\begin{document}

\definecolor{xfqqff}{rgb}{0.4980392156862745,0,1}\definecolor{qqqqff}{rgb}{0,0,1}\definecolor{wwccff}{rgb}{0.4,0.8,1}\definecolor{uuuuuu}{rgb}{0.26666666666666666,0.26666666666666666,0.26666666666666666}

\begin{center}
{\large Linear Upper Bound on the Ribbonlength of  Torus Knots and Twist Knots} 

\vspace*{.88in}

{Grace M. Tian \\ 

\vspace*{.18in}

Harvard University\\
gtian@college.harvard.edu}

\bigskip

\end{center}

\bigskip

\begin{abstract}

Knotted ribbons form an important topic in knot theory. They have applications in natural sciences, such as cyclic duplex DNA modeling. A flat knotted ribbon can be obtained by gently pulling a knotted ribbon tight so that it becomes flat and folded. An important problem in knot theory is to study the minimal ratio of  length to width of a flat knotted ribbon. This minimal ratio is called the ribbonlength of the knot. It has been conjectured that the ribbonlength has an upper bound and a lower bound which are both linear in the crossing number of the knot.

In the first part of the paper, we use grid diagrams to construct flat knotted ribbons and prove an explicit quadratic upper bound on the ribbonlength for all non-trivial knots. We then improve the quadratic upper bound to a linear upper bound for all non-trivial torus knots and twist knots. Our approach of using grid diagrams to study flat knotted ribbons is novel and can likely be used to obtain a linear upper bound for more general families of knots. In the second part of the paper, we obtain a sharper linear upper bound on the ribbonlength for nontrivial twist knots by constructing a flat knotted ribbon via folding the ribbon over itself multiple times to shorten the length.

\end{abstract}

\newpage

\setcounter{page}{2}
\setcounter{section}{0}

\nocite{*}
\section{Introduction}

Knots have been around for thousands of years, but they have only attracted the attention of mathematicians for a relatively short time. In 1867, the physicist William Thomson (later Lord Kelvin) \cite{T} proposed that atoms were knots in a medium called the ether. This led many scientists to believe that they could understand atoms by simply studying knots. Mathematicians began to classify knots and create tables of knots. Although the Michelson-Morley experiment of 1887 \cite{MM} dismissed the existence of the ether and thus the atom/knot hypothesis, knot theory has over time become a promising field of mathematical research in its own right.

A knot is a closed curve in $\mathbb{R}^3$ that is homeomorphic to a circle. Two knots are equivalent if one can continuously deform one knot into the other knot. An effective way to study knots is to consider the projection of a knot onto a plane. The knot diagram of a knot is a projection of the knot with additional information about overcrossing or undercrossing at each crossing point in the diagram. In 1927, Reidemeister \cite{R} showed that two knot diagrams represent the same knot if and only if they are related by a finite sequence of Reidemeister moves, which consist of a twist move, a poke move, and a slide move.

Knotted ribbons form an important topic in knot theory. A knotted ribbon is a topological object in $\mathbb{R}^3$ that is homeomorphic to an annulus.
It can be viewed as a closed strip of paper with a knot. Knotted ribbons find applications in natural sciences. They have been used to model the cyclic duplex DNA in molecular biology, with the two boundaries of the ribbon corresponding to the two edges of the DNA ladder \cite{A}. 

Flat knotted ribbons were first studied by Kauffman \cite{K} in 2005. Like a knot diagram, a flat knotted ribbon is also a two-dimensional object. Intuitively, a flat knotted ribbon can be obtained by gently pulling a knotted ribbon tight so that it becomes flat and folded. The {\em ribbonlength} of a knot is defined to be the minimal ratio of length to width of all possible flat knotted ribbons for the knot. An important problem in knot theory is to study the ribbonlength of a given knot. This question about the ribbonlength of a knot is analogous to a question about the ropelength of a knot, which is the minimal ratio of length to radial width for knotted tubes \cite{BS, CKS, K}. The latter question has immediate applications to biology and chemistry \cite{BS, CKS}. 

Unlike the ropelength of a knot, little has been done on the ribbonlength. Kauffman \cite{K} was able to calculate the ratio of length to width of certain flat knotted ribbons for trefoil knot and figure eight knot and hence obtained an upper bound on the ribbonlength of these two knots. Later, the ribbonlength was also studied by Kennedy, Mattman, Raya, and Tating \cite{KMRT} for torus knots. They also obtained an upper bound on the ribbonlength of some families of torus knots. Their upper bound is linear in the crossing number of the knot for $(q, q+1), (q, 2q + 1)$, and $(q , 2q +2)$ torus knots and is quadratic for $(2, q)$ torus knots.  The {\em crossing number} of a knot is the smallest number of crossings of all possible knot diagrams of the knot. More recently, E. Denne, M. Kamp, R. Terry, and X. Zhu \cite{DKTZ} studied how the number of line segments in a knot diagram formed  by a piecewise linear curve influences the ratio of length to width in the flat knotted ribbon.

Kusner \cite{K} conjectured that the ribbonlength of a knot has upper and lower bounds which are linear in the crossing number of the knot. The linear upper bounds  found by Kennedy, Mattman, Raya, and Tating \cite{KMRT} for some torus knots support Kusner's conjecture.  

In this paper, we use grid diagrams to study flat knotted ribbons. Unlike previous researchers who construct flat knotted ribbons via sophisticated folding, we form a flat knotted ribbon by thickening up the horizontal and vertical paths in the grid diagram of the knot (see Figure 1). For every non-trivial knot, we prove that the ribbonlength has an upper bound which is linear in the crossing number of the knot. We improve the quadratic upper bound to a linear upper bound for every non-trivial torus knot. Our results extend the linear upper bound results of  Kennedy, Mattman, Raya, and Tating \cite{KMRT} from $(q, q+1), (q, 2q + 1)$, and $(q , 2q +2)$ torus knots to all nontrivial torus knots. More importantly, we  improve their results on $(2, q)$ torus knots from a quadratic upper bound to a linear upper bound. For every nontrivial twist knot, we also use grid diagrams to show that the ribbonlength is bounded by an effective constant times the crossing number. We then obtain a sharper linear upper bound for twist knots by constructing a flat knotted ribbon via folding the ribbon over itself multiple times to minimize the length of the ribbon. 

Our approach of using grid diagrams to study flat knotted ribbons is novel and can obviously be used to obtain a linear upper bound on the ribbonlength for more general families of knots.

The rest of the paper is organized as follows. In Section 2, we use grid diagrams to study flat knotted ribbons. We construct a flat knotted ribbon by thickening up the horizontal and vertical paths in the grid diagram, calculate the ratio of length to width for the flat knotted ribbon and hence obtain an upper bound on the ribbonlength. We exploit a known relation between the grid index and crosssing number of a knot and prove a quadratic upper bound on ribbonlength for all non-trivial knots. We then improve the quadratic upper bound to a linear upper bound for all non-trivial torus and twist knots. In Section 3, we obtain a shaper linear upper bound on the ribbonlength of twist knots by constructing a flat knotted ribbon via folding the ribbon over itself multiple times. Finally, we conclude the paper with directions for future research in Section 4. 

\section{Linear upper bound for torus knots and twist knots}
A flat knotted ribbon can be obtained by gently pulling a knotted ribbon tight so that it becomes flat and folded. The center-line of the flat knotted ribbon then becomes a piecewise linear curve in the plane, which forms a knot diagram for the corresponding knot. Each vertex of the piecewise linear curve corresponds to a fold in the flat knotted ribbon.
Conversely, given a knot diagram in the form of a piecewise linear curve, one can construct a corresponding flat knotted ribbon by thickening up the line segments in the knot diagram.

We now consider knot diagrams formed by horizontal and vertical line segments. A grid diagram \cite{OSS} consists of a square grid on the plane with  $n \times n$ unit square cells and a collection of black and white dots. These dots are arranged so that (see Figure 1)

\begin{itemize}

\item every row and column contains exactly one black dot and one white dot;

\item  no cell contains more than one dot;

\item the dot is in the middle of the cell.

\end{itemize}

\bigskip

\begin{figure}[!htbp]
\centering
\includegraphics[width=0.28\textwidth]{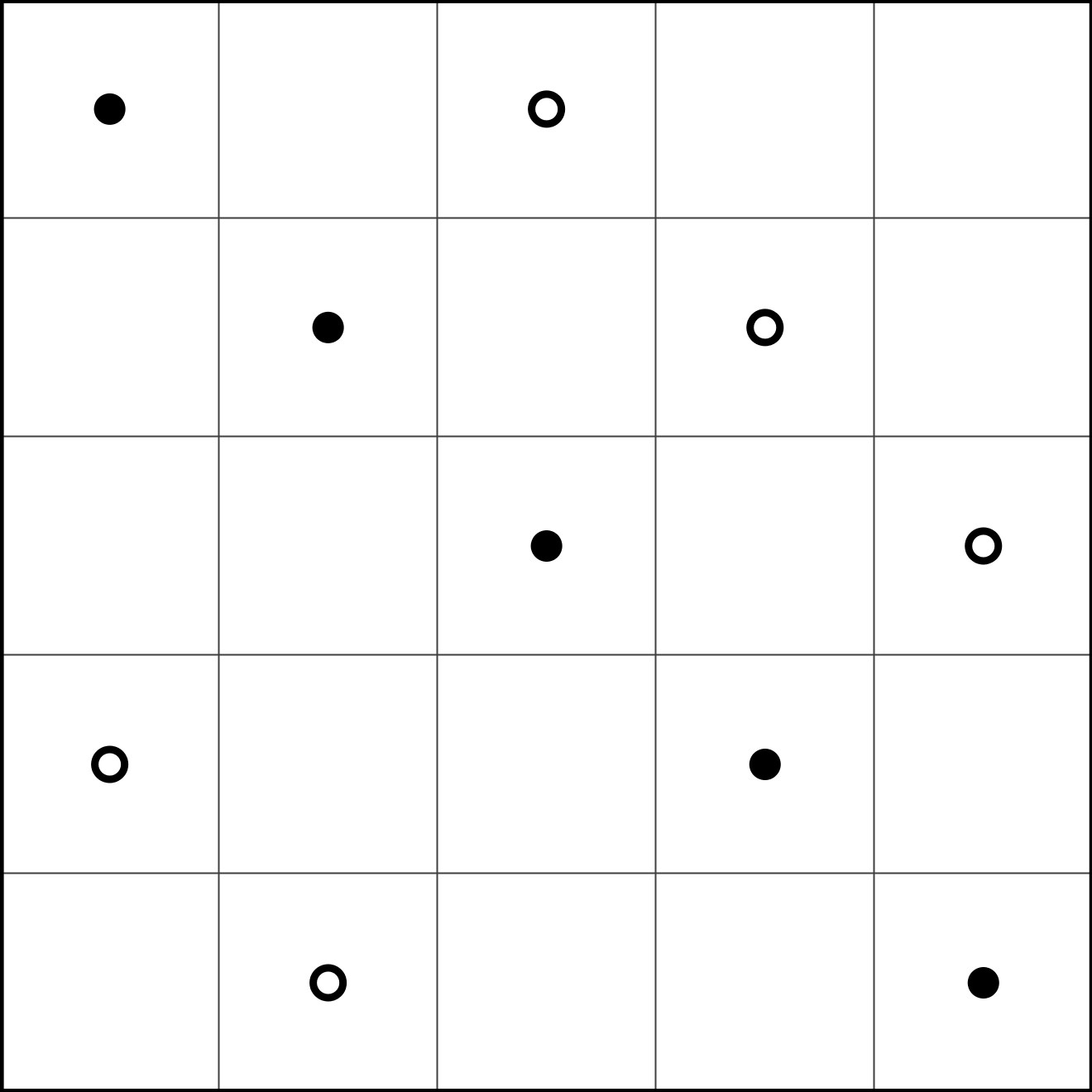}
\includegraphics[width=0.28\textwidth]{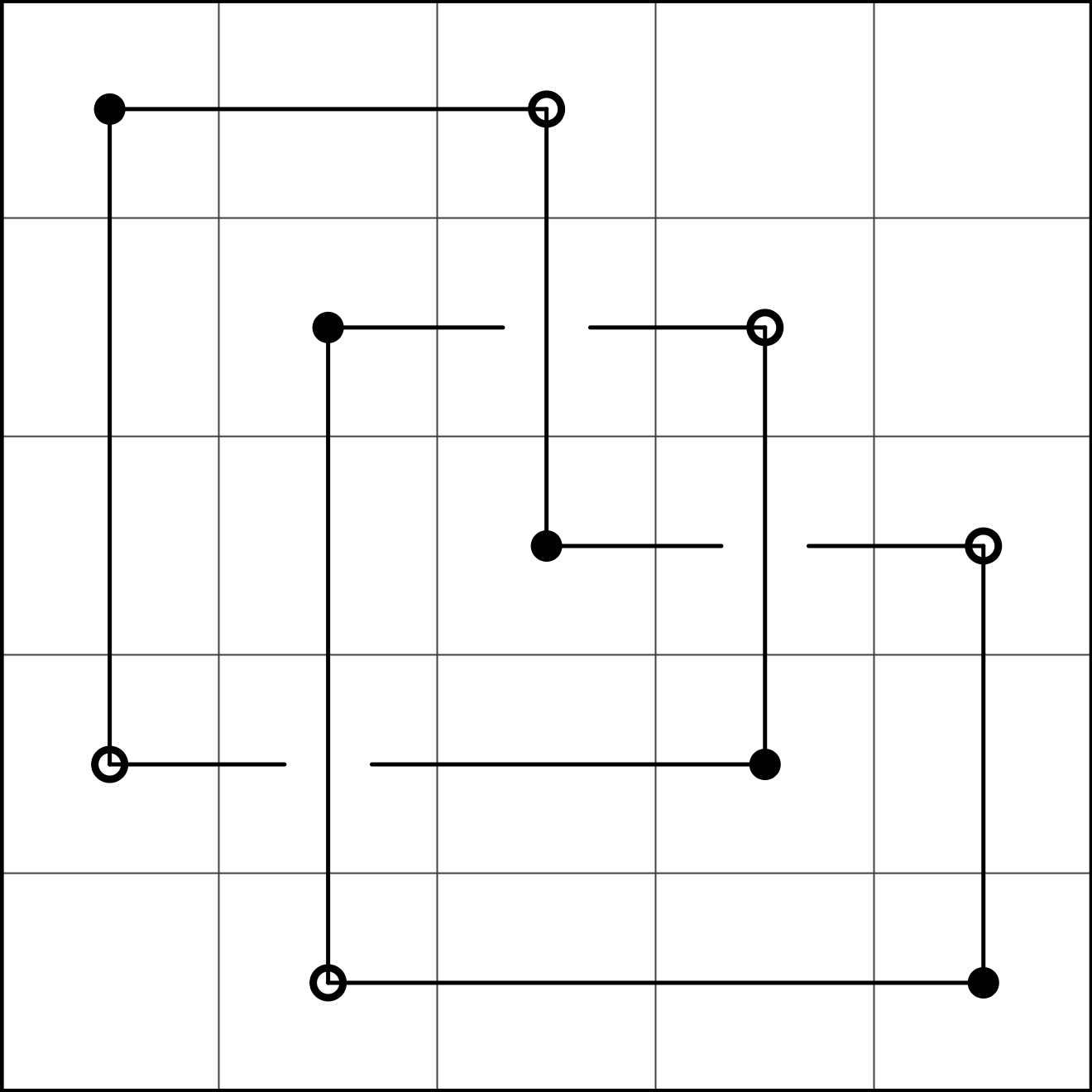}
\includegraphics[width=0.28\textwidth]{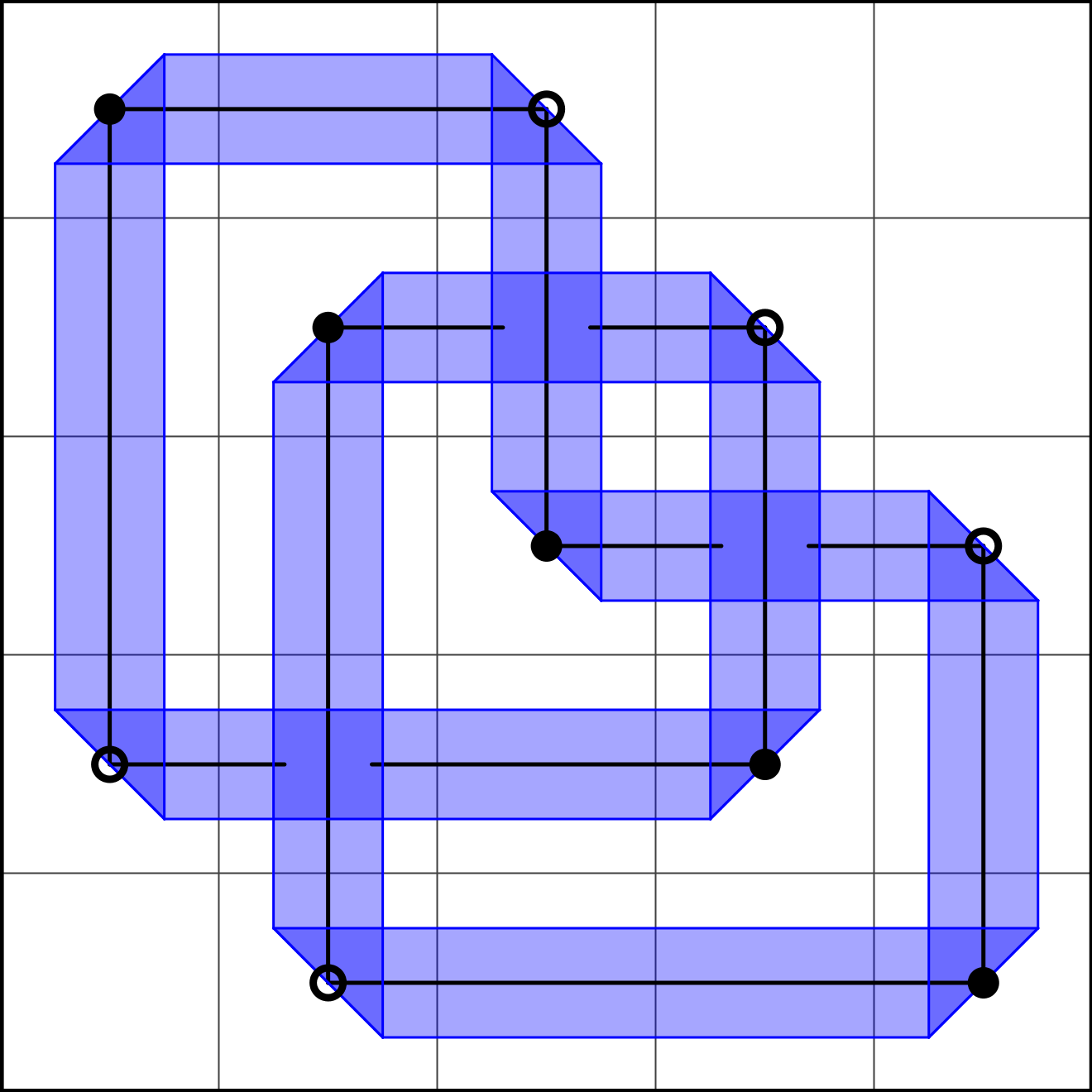}

\medskip

\parbox{5.5in}{\caption{\em Grid diagram, knot diagram, and flat knotted ribbon, respectively.  The width of the ribbon can be as large as $1$, which is the sidelength of the unit square cell.} }

\label{grid}

\end{figure}

We then construct a knot diagram by drawing all horizontal and vertical line segments connecting the white and black dots, with the convention that the vertical strands cross over any horizontal strands that they meet (see Figure 1).

Every knot has a $n \times n$ grid diagram \cite{OSS}, where $n$ is called the grid number. It is straightforward to obtain a flat knotted ribbon from a grid diagram by thickening up all the line segments in the grid diagram and introducing a $45$-degree fold at each white or black dot to make the ribbon turn (see Figure 1). The flat knotted ribbon obtained this way can clearly have a width of one, and the problem is then to calculate the length of the ribbon.  

For each knot, the minimal grid number of its grid diagrams is called the {\em grid index} of the knot. The relation between the grid index and crossing number of a knot has been well studied \cite{BP, HKON}.  It has been proved \cite{BP, HKON} that $g(k) \leq c(K) + 2$ for every nontrivial knot $K$, where $g(K)$ and $c(K)$ denote the grid index and crossing number of $K$, respectively

\begin{theorem}
For every non-trivial knot $K$, the ribbonlength has an upper bound that is quadratic in the crossing number of $K$. More precisely, the upper bound is 
$$2 [c(K) + 1][c(K) + 2],$$ where $c(K)$ is the crossing number of $K$.
\end{theorem}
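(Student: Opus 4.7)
The plan is to exhibit a flat knotted ribbon for $K$ directly from its minimal grid diagram and then bound the resulting length–to–width ratio. By the grid index bound cited from \cite{BP, HKON}, every nontrivial knot $K$ admits an $n \times n$ grid diagram with $n = g(K) \leq c(K) + 2$. Starting from such a diagram, I thicken every horizontal and vertical segment (the ones joining the black and white dots in each row and column) to a strip of width $1$, the side length of a unit cell, and introduce a $45$-degree crease at each dot so that the ribbon turns smoothly from a horizontal strip to a vertical strip. The construction described in the paragraph preceding the theorem guarantees that the resulting object is a flat knotted ribbon for $K$ of width exactly $1$.

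It then remains to bound the length of this ribbon, which equals the total length of the horizontal and vertical segments in the grid diagram (the $45$-degree folds at the dots leave the center-line length equal to the sum of the segment lengths). Since the dots are placed at the centers of cells in an $n \times n$ grid, any two dots in the same row (or column) are at most $n-1$ apart. There are exactly $n$ rows and $n$ columns, each contributing one segment, so the total length is at most
$$
n(n-1) + n(n-1) = 2n(n-1).
$$
Dividing by the width $1$ yields a ribbonlength upper bound of $2n(n-1)$, and substituting $n \leq c(K) + 2$ gives
$$
2n(n-1) \leq 2\bigl[c(K) + 2\bigr]\bigl[c(K) + 1\bigr],
$$
as desired.

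The only step that requires some care is the length accounting at corners. I would verify that the thickening construction around each dot contributes precisely the lengths of the two meeting segments to the center-line of the ribbon, with no double counting and no extra length introduced by the $45$-degree fold (each fold merely redirects the ribbon without adding length, while the overlap of a horizontal and vertical strip at a crossing also does not alter the length of the ribbon's center-line). This is a purely geometric check and is the main conceptual point to justify cleanly; everything else reduces to the elementary counting above together with the cited inequality $g(K) \leq c(K) + 2$.
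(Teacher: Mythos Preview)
Your argument is correct and follows essentially the same route as the paper's own proof: build the ribbon from a minimal grid diagram, bound each of the $2n$ segments by $n-1$, and then invoke $g(K)\le c(K)+2$. Your extra remark about the corner accounting is a nice clarification but does not change the underlying strategy.
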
 
\begin{proof}
Suppose the knot $K$ is represented by a $g(K) \times g(K)$ grid diagram, where $g(K)$ is the grid index of $K$. We now construct the flat knotted ribbon with the unit width and estimate the length of the ribbon as follows. Each horizontal distance between a white dot and a black dot cannot exceed $g(K) - 1$. Hence, the sum of all such horizontal distances has an upper bound $g(K)[g(K) - 1]$. Similarly, the sum of all vertical distances between the white and black dots has an upper bound $g(K)[g(K) - 1]$.  Since the ribbonlength is the minimal length of ribbon with unit width, we obtain
$$\mbox{Ribbonlength(K)} \leq  \mbox{Length of ribbon}  \leq 2 g(K)[g(K) - 1]  \leq 2 [c(K) + 1][c(K) + 2] \ ,$$ where we have used $g(k) \leq c(K) + 2$ in the second inequality. The proof of the theorem is completed. \end{proof}

Kusner \cite{K} conjectured that the ribbonlength of a knot has upper and lower bounds which are linear in the crossing number of the knot. We next improve Theorem 1 from a quadratic upper bound to a linear upper bound for all nontrivial torus knots and twist knots.

\subsection{Torus knots}

Torus knots have grid diagrams with clear patterns, so it is possible to use the above construction of a flat knotted ribbon from a grid diagram to calculate the ratio of length to width precisely. The result gives a linear upper bound on the ribbonlength of torus knots. 
\begin{theorem}
For every non-trivial torus knot $K$, the ribbonlength has an upper bound that is linear in the crossing number of $K$. More precisely, the upper bound is 
$8 c(K)$.
\end{theorem}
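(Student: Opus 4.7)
The plan is to specialize the Section~2 grid-thickening construction to a torus knot $T(p,q)$ using a very structured ``staircase'' grid diagram whose dots lie on two shifted diagonals, so that each row- and column-segment has length bounded by $\max(p,q)$ rather than the generic $g(K)-1$ used in the proof of Theorem~1.

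Without loss of generality I would take $K = T(p,q)$ with $2 \le p < q$ and $\gcd(p,q)=1$, noting that $T(1,q)$ is the unknot and the roles of $p$ and $q$ can be swapped. I would then fix the $n \times n$ grid with $n = p+q$, place the white dots at cells $(i,i)$ and the black dots at cells $(i,(i+q)\bmod n)$ for $i = 0,1,\dots,n-1$. A quick check shows this is a legal grid diagram (exactly one white and one black dot per row and per column), and I would cite the standard identification of this staircase diagram with the torus knot $T(p,q)$.

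Next I would directly tally the segment lengths produced by the Section~2 thickening. In row $i$ the horizontal distance is $q$ when $i+q < n$ and $n-q = p$ otherwise, so exactly $p$ rows contribute $q$ each while $q$ rows contribute $p$ each, giving a total horizontal length of $2pq$; a symmetric count in the columns gives $2pq$ for the vertical total, so the unit-width ribbon has length exactly $4pq$.

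Finally, using the known torus-knot crossing-number formula $c(K) = q(p-1)$ for $2 \le p \le q$, this yields
$$\mbox{Ribbonlength}(K) \;\le\; 4pq \;=\; \frac{4p}{\,p-1\,}\, c(K) \;\le\; 8\, c(K),$$
where the last inequality is tight exactly at $p=2$, which is what forces the constant $8$ in the statement. The only real obstacle is the wrap-around bookkeeping in the case $i+q \ge n$ when splitting rows into those contributing $q$ versus $p$ (and the analogous column split), together with the justification that the chosen staircase is indeed a grid diagram for $T(p,q)$; everything else is routine once the right grid diagram is in hand.
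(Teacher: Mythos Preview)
Your proposal is correct and follows essentially the same approach as the paper: the same $(p+q)\times(p+q)$ staircase grid diagram for $T(p,q)$, the same total ribbon length $4pq$, and the same final inequality $4pq = \tfrac{4p}{p-1}\,c(K)\le 8\,c(K)$. If anything, your explicit wrap-around bookkeeping (splitting the rows into $p$ rows of length $q$ and $q$ rows of length $p$) spells out a step the paper simply asserts.
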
 

\begin{proof}
It is known \cite{A} that a $(p, q)$ torus knot is an unknot if and only if either $p$ or $q$ is $\pm 1$ and that a torus link arises if $p$ and $q$ are not coprimes. 
It is also known that both the $(q, p)$ torus knot and $(-p, -q)$ torus knot are equivalent to the $(p, q)$ torus knot and that the $(p,  - q)$ torus knot is the mirror image of the $(p, q)$ torus knot. Without loss of generality, we may consider the $(p, q)$ torus knot, where $p$ and $q$ are coprime and $2 \leq p < q$. Figure 2 gives a grid diagram for the $(p, q)$ torus knot \cite{LHLO, OSS}.

\begin{figure}[!htbp]
\centering
\includegraphics[width=0.5\textwidth]{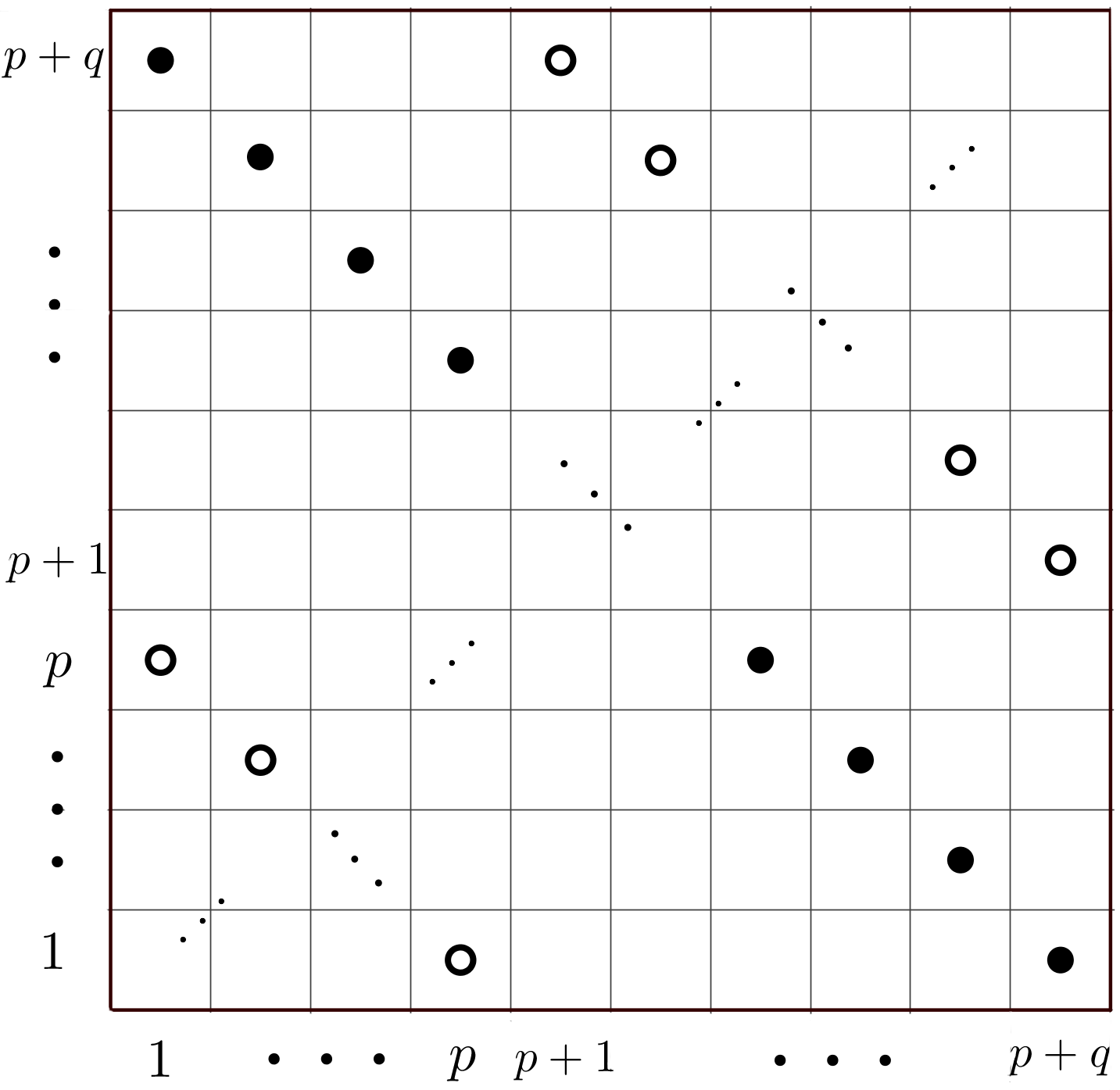}

\medskip

\parbox{5.5in}{\caption{\em Grid diagram of the $(p, q)$ torus knot. The $p^{\text{th}}$ diagonal below and the $q^{\text{th}}$ diagonal above the main diagonal are filled with white dots.}}

\label{pqgrid}

\end{figure}

We now construct the flat knotted ribbon with unit width and calculate the length of the ribbon as follows. The sum of all the horizontal distances between the white and black dots is $2 p q$, and so is the sum of all the vertical distances between the white and black dots.
Since the crossing number of the $(p,q)$ torus knot is $(p-1)q$ in view of $p < q$, we hence obtain
$$ {\mbox{Length of ribbon} \over c(K)} = {4 p q \over (p-1) q} = {4 \over 1 - {1 \over p}} \leq 8 \ ,$$
where we have used $p \geq 2$ in the inequality. Since the ribbonlength is the minimal length of ribbon with unit width,  we  have
$$\mbox{Ribbonlength(K)} \leq \mbox{Length of ribbon} \leq 8 c(K) \  ,$$
which proves the theorem.
\end{proof}

We note that  Kennedy, Mattman, Raya, and Tating \cite{KMRT} found a linear upper bound on the ribbonlength for some families of torus knots and a quadratic upper bound for other families of torus knots. Our results extend their results to a linear upper bound on ribbonlength for all non-trivial torus knots.

\subsection{Twist knots}

In this subsection, we use grid diagrams to study flat knotted ribbons for twist knots, which are considered the simplest type of knots after the torus knots.  A twist knot is obtained by linking together the two ends of a loop with multiple half twists (see Figure 3). We use the notation of the paper \cite{HS} to denote by $J(2, - n)$ and $J(-2, - n)$ the twist knots pictured in Figure 3.  The $n$ crossings are right-handed for $n < 0$ and left-handed for $n > 0$. 

\bigskip

\begin{figure}[!htbp]
\centering
\includegraphics[width=0.4\textwidth]{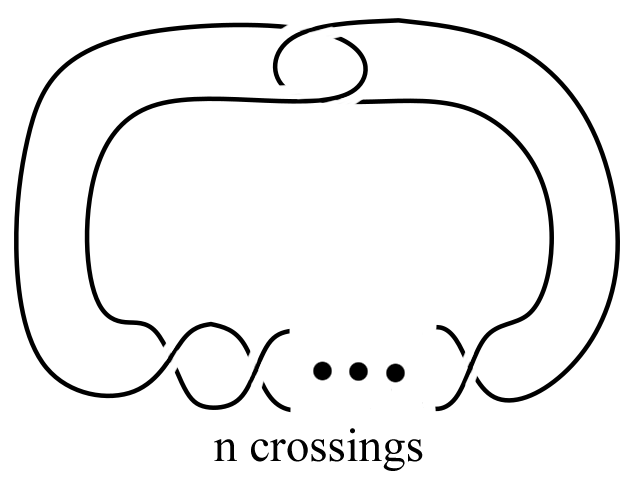}
\includegraphics[width=0.4\textwidth]{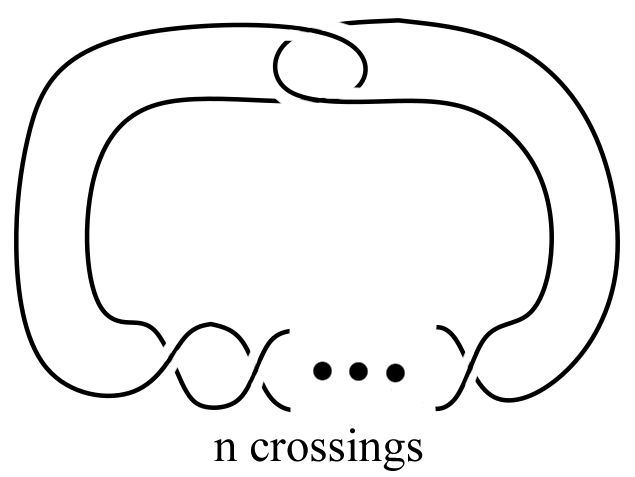}

\medskip

\parbox{5.5in}{\caption{\em The knot diagrams of $J(2, - n)$ and $J(-2, -n)$, $n > 0$.}}

\label{two twist knots}

\end{figure} %need to cite the red links along with it

Like torus knots, twist knots also have patternable grid diagrams and their crossing numbers have been known. We can therefore use the method described earlier this section to obtain a linear upper bound on the ribbonlength of twist knots.

\begin{theorem}
For every non-trivial twist knot $K$, the ribbonlength has an upper bound that is linear in the crossing number of $K$. More precisely, the upper bound is $8 c(K)$. 
\end{theorem}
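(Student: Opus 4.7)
The plan is to mirror the proof of Theorem 2, replacing the $(p,q)$ torus grid diagram with explicit grid diagrams for the two twist knot families $J(2,-n)$ and $J(-2,-n)$. First I would exhibit, for each family, a patterned grid diagram whose size grows linearly in $n$ (the number of half-twists). The natural choice is to place the bulk of the dots along two nearby diagonals to encode the $n$-fold twist region, and then attach a small constant-sized ``clasp'' block of dots at one corner that realizes the two extra crossings coming from the bottom loop in Figure 3. Once the pattern is fixed, I would verify that the resulting horizontal/vertical strand diagram, with vertical-over-horizontal convention, indeed represents $J(2,-n)$ (respectively $J(-2,-n)$); this is the bookkeeping step where one checks Reidemeister-equivalence to the diagrams of Figure 3.

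With the grid diagram in hand, I would then apply the thickening construction of Section 2 to produce a flat knotted ribbon of width $1$, and compute the total ribbon length as the sum of horizontal distances between matched black/white dots plus the sum of vertical distances. Because almost all dots sit on two adjacent diagonals, both sums should be bounded by a linear expression in $n$, with an explicit leading coefficient. I would organize the computation by separating the twist region (whose contribution grows linearly) from the constant-sized clasp region (whose contribution is $O(1)$).

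The final step is numerical: divide the total length by the crossing number $c(K)=n+2$ of the twist knot (this value of $c(K)$ is classical) and verify that the ratio is at most $8$ for every admissible $n$. As in Theorem 2, monotonicity in $n$ should reduce the check to confirming the inequality at the smallest value of $n$ for which $J(2,-n)$ or $J(-2,-n)$ is nontrivial, plus an asymptotic check as $n \to \infty$; the constant $8$ is large enough to absorb the $O(1)$ clasp contribution.

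The main obstacle I anticipate is not the length estimate itself but the first step: writing down a grid diagram for each twist knot family with a sufficiently transparent combinatorial pattern, and checking that it really represents the intended knot. The torus knot case benefited from the clean ``two filled diagonals'' description; for twist knots the clasp introduces asymmetry, and one must be careful that the chosen pattern avoids double-counting crossings and satisfies the one-dot-per-row/column rule. Once this is pinned down, the arithmetic bounding length/$c(K)$ by $8$ should follow in the same clean style as the torus knot argument.
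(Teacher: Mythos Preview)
Your proposal is correct and follows essentially the same approach as the paper: exhibit a patterned grid diagram for the twist knot, thicken to a width-$1$ ribbon, sum the horizontal and vertical dot-to-dot distances, and compare with $c(K)=n+2$. Two streamlining moves in the paper's version are worth noting. First, rather than treating $J(2,-n)$ and $J(-2,-n)$ separately, the paper invokes the known relations $J(2,n)\simeq J(-2,n-1)$ and that $J(-2,-n)$ is the mirror image of $J(2,n)$ to reduce everything to the single family $J(2,-n)$ with $n\ge 1$. Second, the specific grid diagram the paper writes down has total horizontal distance $(n+2)+2(n+1)+(n+1)+3=4n+8$ and the same vertical total, so the ribbon length is \emph{exactly} $2(4n+8)=8(n+2)=8\,c(K)$; no monotonicity check or small-$n$ verification is needed, since the ratio is identically $8$ for every $n$.
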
 
\begin{proof}

It is known \cite{ DHY, HS} that $J(2, 0)$ is an unknot, that $J(2, n)$ is equivalent to $J(-2, n-1)$, and that $J(-2, -n)$ is the mirror image of $J(2, n)$. It therefore suffices to study the twist knot $J(2, - n)$, where $n$ is a positive integer. Such a twist knot $J(2,- n)$ has the grid diagram as displayed in Figure 4. We can verify that Figure 4 indeed gives a grid diagram of $J(2, -n)$ by drawing the knot diagram, turning the knot diagram $45$ degrees clockwise, and comparing it with Figure 3. 

\bigskip

\begin{figure}[!htbp]
\centering
\includegraphics[width=0.45\textwidth]{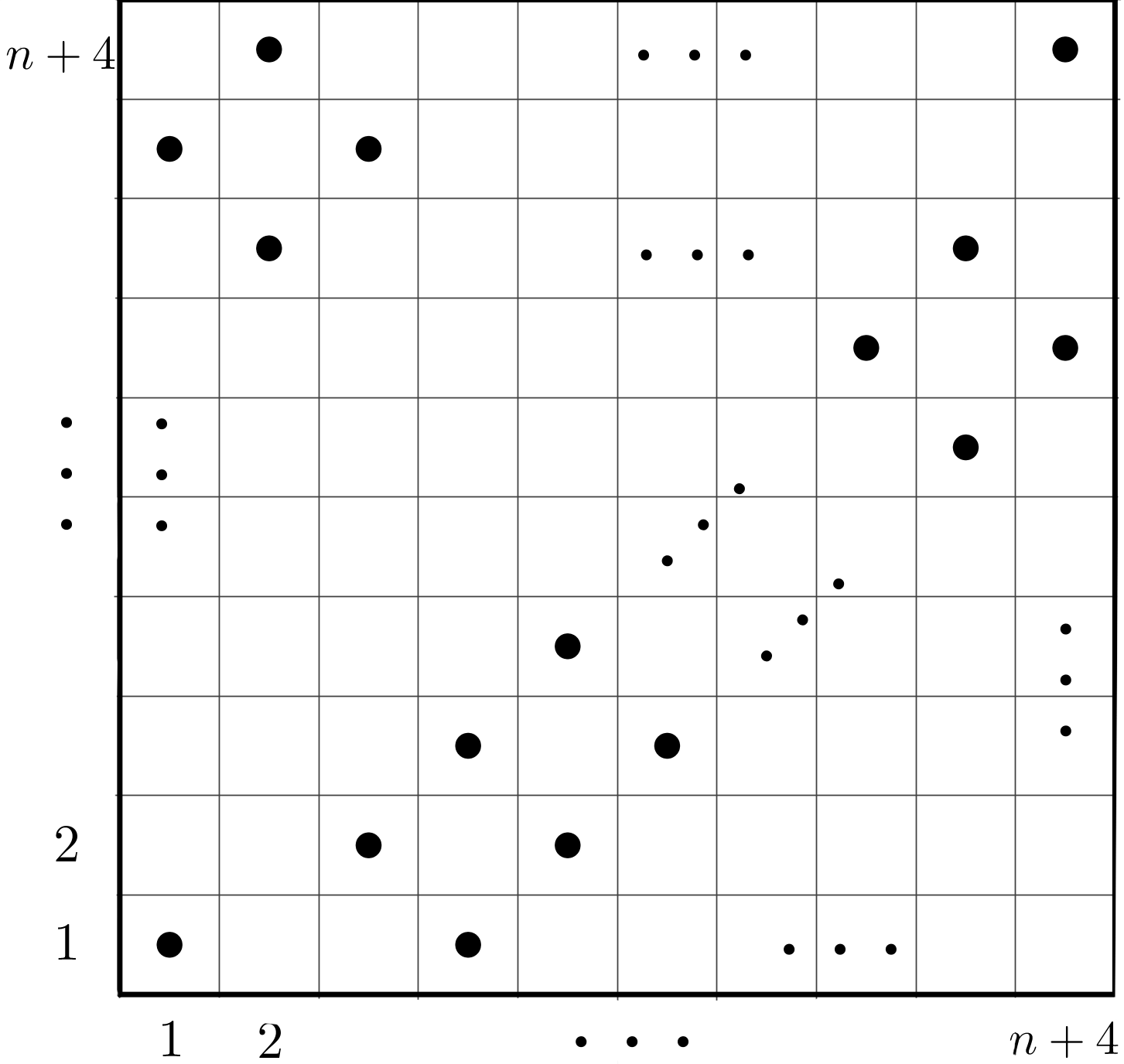}

\medskip

\parbox{5.5in}{\caption{\em Grid diagram of $J(2, - n)$. We omit the distinction between white and black dots because it is not important here.  } }

\label{twistgrid}

\end{figure}

We now construct the flat knotted ribbon with unit width and calculate the length of the ribbon as follows. The sum of all the horizontal distances between dots  is $(n+2) + (n+1) \times 2 + (n+1) + 3 =  (4n +8)$, and so is the sum of all the vertical distances between dots. Since the crossing number of the twist knot $J(2, -n)$ is $n +2 $, we hence obtain
$$ {\mbox{Length of ribbon} \over c(K)} = {2(4n +8) \over n+2} =  8 \ .$$
We therefore have
$$\mbox{Ribbonlength(K)} \leq \mbox{Length of ribbon} = 8 c(K) \  ,$$
which proves the theorem.
\end{proof}

We conclude this section with an observation. Our method of using grid diagrams to construct flat knotted ribbons is new and can clearly be employed to study many other classes of knots. The flat knotted ribbons constructed from grid diagrams have small overlapped portions near the foldings and crossings, and furthermore the ribbons are laid down only horizontally and vertically. To improve the linear upper bound on the ribbonlength, we need to shorten the length of the ribbon by enlarging the overlap in a flat knotted ribbon and allowing the ribbon to go in directions other than horizontal and vertical ones. In the next section, we will use a different method to construct flat knotted ribbons for twist knots. The overlapped portion in the flat knotted ribbon is so large that we get a much better ratio of length to width. Our approach has some similarity with the ones used by previous researchers \cite{K, KMRT}, but it gives excellent results for all twist knots rather than merely some classes of twist knots.

\section{A sharper linear upper bound for twist knots}

In this section, we obtain a better linear upper bound on the ribbonlength by constructing a flat knotted ribbon via folding the ribbon over itself multiple times to minimize the length of the ribbon. This produces a much better ratio of length to width. As discussed in Section 2.2, it suffices to study the twist knot $J(2, - n)$, where $n$ is a positive integer. 

\begin{theorem}
The ribbonlength of the twist knot $J(2, - n)$  has an upper bound of
\begin{equation}
\label{even}
\frac{\sqrt{5}+1}{2} \ n + {9 + \sqrt{5} \over 2} +\sqrt{5+\sqrt{5} \over 2}
\end{equation}
when $n$ is a positive even integer and an upper bound of
\begin{equation}
\label{odd}
\frac{\sqrt{5}+1}{2} \ n +  5 + \sqrt{5}+\sqrt{5+\sqrt{5} \over 2}
\end{equation}
when $n$ is a positive odd integer.

\end{theorem}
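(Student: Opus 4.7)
The plan is to realize $J(2,-n)$ as an explicit flat knotted ribbon of unit width whose centerline is a zigzag built from pentagonal angles, in the spirit of Kauffman's construction for the trefoil. Once the centerline is drawn, its total arclength is immediately an upper bound for the ribbonlength, and the golden ratio appears because all fold angles are chosen to be multiples of $36^\circ$. The parity split in the theorem will come from the orientation of the ribbon as it exits the twist region.

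First I would handle the twist region. The $n$ half-twists of $J(2,-n)$ are placed along a narrow corridor in which the ribbon travels back and forth, with each interior fold reflecting the direction of travel by $2 \cdot 36^\circ = 72^\circ$. With unit width, an optimal placement spaces consecutive folds a centerline distance of $\phi := \frac{1+\sqrt{5}}{2} = 2\cos 36^\circ$ apart; this is the diagonal-to-side ratio in a regular unit pentagon, and it is exactly the distance at which the unit-wide ribbon, folded at $36^\circ$, tucks snugly under the previous fold. One such segment is used per half-twist, giving the contribution $\frac{\sqrt{5}+1}{2} n$ to the length. Alternating which side of the corridor each fold sits on yields the alternating crossings of the twist.

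Second, at each end of the twist corridor I would attach a short closure piece realizing the two clasp crossings of $J(2,-n)$. I would continue to use only multiples of $36^\circ$ at folds, so every segment has length $1$, $\phi$, or $2\cos 18^\circ = \sqrt{\frac{5+\sqrt{5}}{2}}$; this last length occurs once, on the longer diagonal that bridges the clasp around itself. Summing these closure lengths produces the constant terms $\frac{9+\sqrt{5}}{2} + \sqrt{\frac{5+\sqrt{5}}{2}}$ when $n$ is even and $5 + \sqrt{5} + \sqrt{\frac{5+\sqrt{5}}{2}}$ when $n$ is odd. The parity of $n$ determines on which side of the corridor the ribbon exits from the twist region and hence whether the corresponding closure needs one extra golden-ratio segment; note that the difference $(5+\sqrt{5}) - \frac{9+\sqrt{5}}{2} = \phi$ between the two constants is precisely the length of one extra such segment, a useful sanity check.

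The main obstacle I expect is the geometric verification: the centerline constructed above must avoid unintended self-contact so that the thickened ribbon has width exactly $1$ at every cross section, and the over/under choices at the folds and at the clasp crossings must recover $J(2,-n)$ rather than a neighboring knot. This amounts to checking a small number of local pictures at the ends of the twist region and comparing with Figure 3. Once this verification is complete, the bound follows by summing the finitely many segment lengths of types $1$, $\phi$, and $2\cos 18^\circ$ and dividing by the unit width, which gives exactly the two stated upper bounds.
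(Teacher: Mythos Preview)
Your pentagonal-angle construction is not what the paper does, and as written it has a real gap in the twist region. In the paper the $n$ half-twists are realized by laying the unit-width ribbon \emph{vertically back and forth on top of itself}: the centerline runs up from $x_5=(\tfrac{w}{2},-\tfrac{w+a}{2})$ to $x_4=(\tfrac{w}{2},w)$, folds $180^\circ$ along a horizontal crease $AB$, runs straight back down along the same vertical, and repeats. The per-twist cost $\tfrac{1+\sqrt5}{2}\,w$ is not a pentagon diagonal but simply the vertical gap $|x_4x_5|=\tfrac{3w+a}{2}$, where $a=(\sqrt5-2)w$ is forced by the closure geometry: the first diagonal strand $x_2x_3$ is parallel to $OD$ with $O=(0,0)$ and $D=(2w,w)$, so its angle $\theta_1$ satisfies $\tan\theta_1=\tfrac12$ (not $36^\circ$ or $72^\circ$), and this in turn fixes the height of the bottom crease $HI$. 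The term $\sqrt{(5+\sqrt5)/2}$ likewise appears as the hypotenuse $\sqrt{1+\phi^2}$ of the $1\times\phi$ right triangle spanned by $x_6x_7$, not from an $18^\circ$ fold. The golden-ratio constants are a byproduct of the $\arctan\tfrac12$ closure, not of pentagonal symmetry.

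The concrete problem with your plan is that a planar zigzag whose direction alternates by $72^\circ$ has no interior self-crossings: segments $1,3,5,\dots$ are mutually parallel and so are $2,4,6,\dots$, so ``alternating which side of the corridor each fold sits on'' cannot by itself manufacture the $n$ crossings of the twist. In the paper those crossings come precisely from the degenerate overlap of the vertical strands, resolved by the overfold/underfold pattern and then verified via explicit Reidemeister moves against the standard diagram of $J(2,-n)$. Your proposal would need an explicit centerline (coordinates of every fold point) before one can count crossings or sum lengths; at present the three claimed segment types $1$, $\phi$, $2\cos18^\circ$ are reverse-engineered from the target rather than read off a construction. Finally, the paper does \emph{not} handle odd $n$ by rerouting the closure on the other side of the corridor: it rebuilds the even-$n$ ribbon with all but two crossings reversed to realize $J(2,n)$, and then invokes the equivalence $J(2,n)\simeq\text{mirror of }J(2,-(n-1))$ to transfer the even bound, which is exactly why the odd constant exceeds the even one by $\phi$.
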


The upper bounds on the ribbonlengh in Theorem 4 are connected to the crossing number of the knot as follows. Since the twist knot $J(2, -n)$, where $n$ is a positive integer, has crossing number $n+2$, we combine the upper bounds (\ref{even})-(\ref{odd}) to conclude that 
\begin{eqnarray*}
\mbox{Ribbonlength of $J(2, -n)$} &\leq& \frac{\sqrt{5}+1}{2} \ n +  5 + \sqrt{5}+\sqrt{5+\sqrt{5} \over 2} \\
&=&  \frac{\sqrt{5}+1}{2} \ c(J(2, -n)) +  4 +\sqrt{5+\sqrt{5} \over 2} \\
&\leq& \frac{\sqrt{5}+2}{2} \ c(J(2, -n))
\end{eqnarray*}
for positive integer $n \geq 10$. We note that the linear upper bound  for $n \geq 10$ $$\frac{\sqrt{5}+2}{2} \ c(J(2, -n))  \approx 2.12 \ c(J(2, -n))$$ is a big improvement of the linear upper bound $$8 c(J(2, -n))$$ that is obtained in Theorem 3.

We first prove Theorem 4 for the case when the number of half twists is an even integer, i.e., $n$ is even. We then extend the result to the case when the number of half twists is an odd integer.

\begin{figure}[!htbp]
\centering
\includegraphics[width=0.8\textwidth]{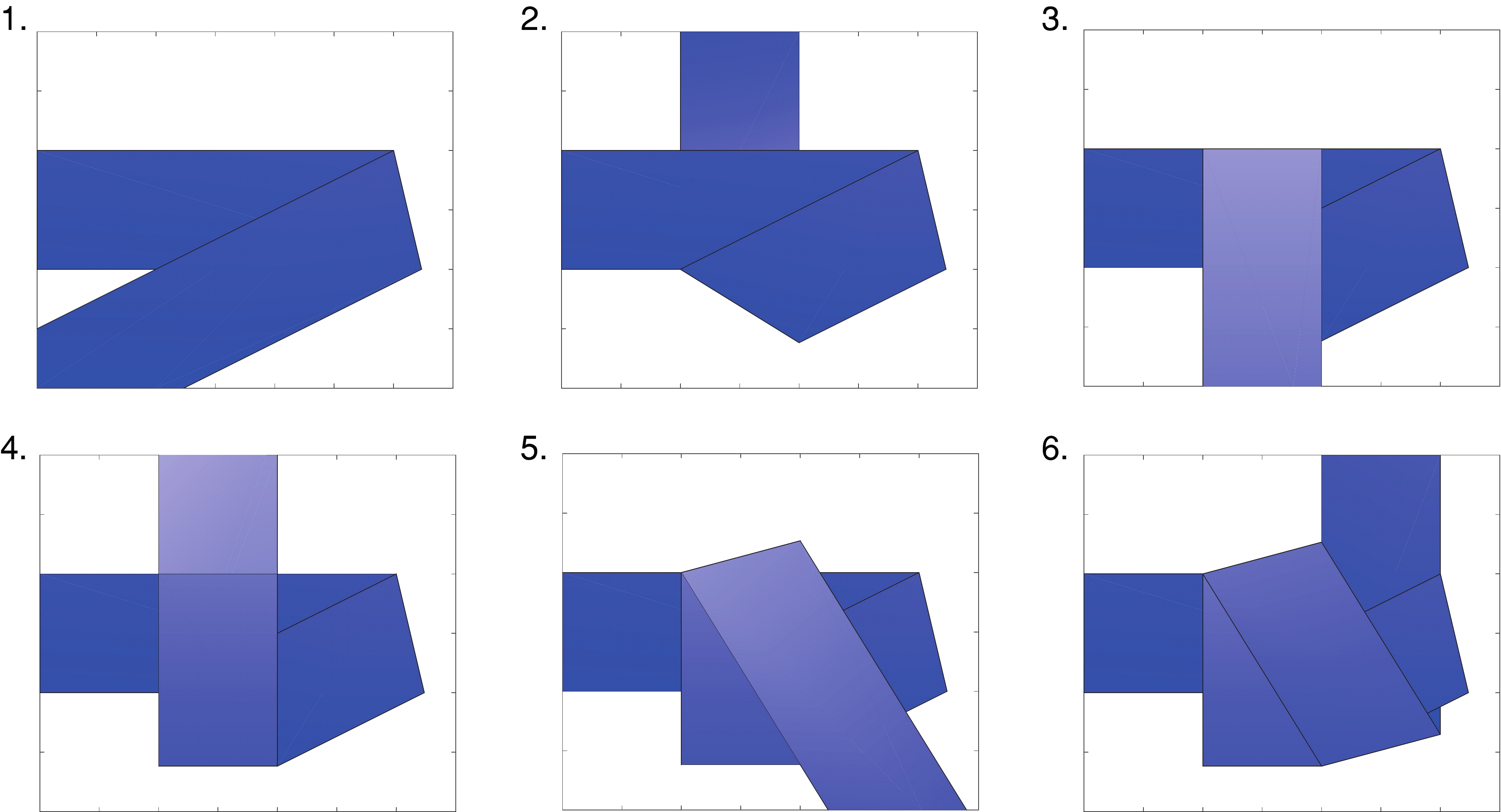}

\bigskip

\parbox{5.5in}{\caption{\em Steps to construct a flat  knotted ribbon for a twist knot. For simplicity, the final step of closing the ribbon is not shown in the figure.}}
\label{steps}

\end{figure} 

We construct a flat knotted ribbon as follows (see Figures 5 and 8). The ribbon first travels horizontally along the center-line $x_1x_2$ and it turns around at the overfold $DG$. The ribbon then goes along the center-line $x_2x_3$ and then turns around at the underfold $OI$. It goes up vertically along $x_3x_4$ to the overfold $AB$ and then turns around to go down vertically along $x_4x_5$ to the underfold $HI$. The ribbon then goes up along $x_5x_4$ to the overfold $AB$ and then goes down along $x_4x_5$ to the underfold $HI$ multiple times. The ribbon then goes up vertically along $x_5x_6$ to the overfold $AC$, turns around to travel along the center-line $x_6x_7$ on top of the previous ribbon to the underfold $IJ$, and goes up vertically along $x_7x_8$ under the rest of ribbon except the one along $x_1x_2$ to the edge $BD$. We finally close the ribbon by folding the ribbon down at along $BD$, then immediately folding the ribbon again along $DE$, and letting the ribbon travel horizontally to its start line $AO$.

\begin{figure}[!htbp]
\centering
\includegraphics[width=0.7\textwidth]{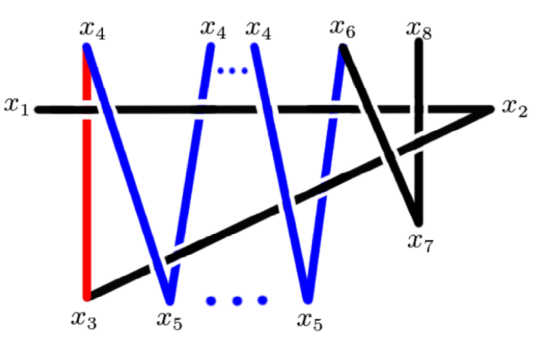}
\parbox{5.5in}{\caption{\em A slight distortion of the center-line of our constructed flat knotted ribbon, in which the line segment $x_4x_5$ should overlap the line segment $x_3x_4$. For simplicity, the linking of $x_1$ and $x_8$ is not shown in the figure.  }}
\label{distortion}

\end{figure}

\bigskip

We list several examples of the paths of the center-line of the flat knotted ribbons. The square brackets indicate the place where the ribbon has multiple half twists.
\begin{itemize}
\item 2 half twists: $x_1x_2[x_3x_6x_7]x_8x_9x_1$
\item 4 half twists: $x_1x_2[x_3x_4x_5x_6x_7]x_8x_9x_1$
\item 6 half twists: $x_1x_2[x_3x_4x_5x_4x_5x_6x_7]x_8x_9x_1$
%\item 8 half twists: $x_1x_2[x_3x_4x_5x_4x_5x_4x_5x_6x_7]x_8x_9x_1$
\item $n$ half twists ($n \ge 4$): $x_1x_2[x_3\underbrace{x_4x_5}_{\text{\normalfont  ${ n-2 \over 2}$ times}}x_6x_7]x_8x_9x_1$

\end{itemize}

We now show that the flat knotted ribbon that we have constructed corresponds to the twist knot $J(2, - n)$, where $n$ is the number of half twists. Without loss of generality, we consider the case of $4$ half twists: $x_1x_2[x_3x_4x_5x_6x_7]x_8x_9x_1$. The center-line of the flat knotted ribbon is depicted in Figure 6 and it becomes a knot diagram if we link the initial point $x_1$ and the terminal point $x_8$. This knot diagram is obviously equivalent to the leftmost knot diagram in Figure 7. Reidemeister moves displayed in Figure 7 show that the leftmost knot diagram is equivalent to the rightmost knot diagram, which is a knot diagram of the twist knot $J(2, -4)$ (see Figure 3).

\bigskip

\begin{figure}[!htbp]
\centering
\includegraphics[width=1\textwidth]{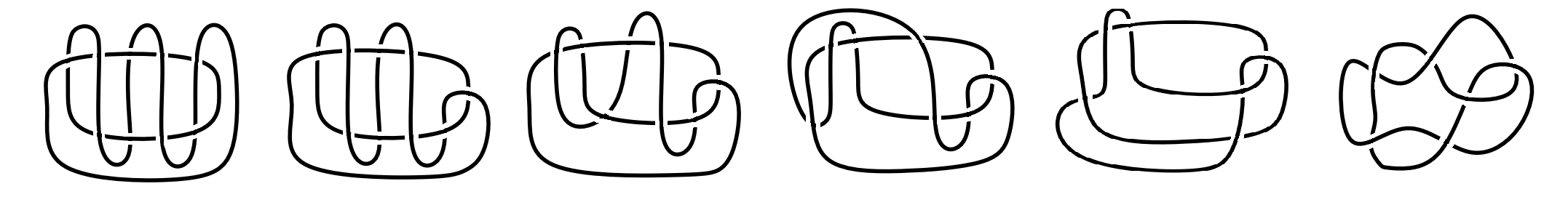}
\parbox{5.5in}{\caption{\em Reidemeister moves from our leftmost knot digram with four half twists to the rightmost knot diagram of the twist knot $J(2, -4)$.}}
\label{Rei}

\end{figure} %will change this figure after RSI so that moves are applicable/generalizable to any even twist knot

We are now ready to finish the proof of the theorem by calculating the ratio of length to width of the flat knotted ribbon for an even number of half twists and extending the result to the case when the number of half twists is an odd number.

\begin{figure}[h]

\centering
\includegraphics[width=1\textwidth]{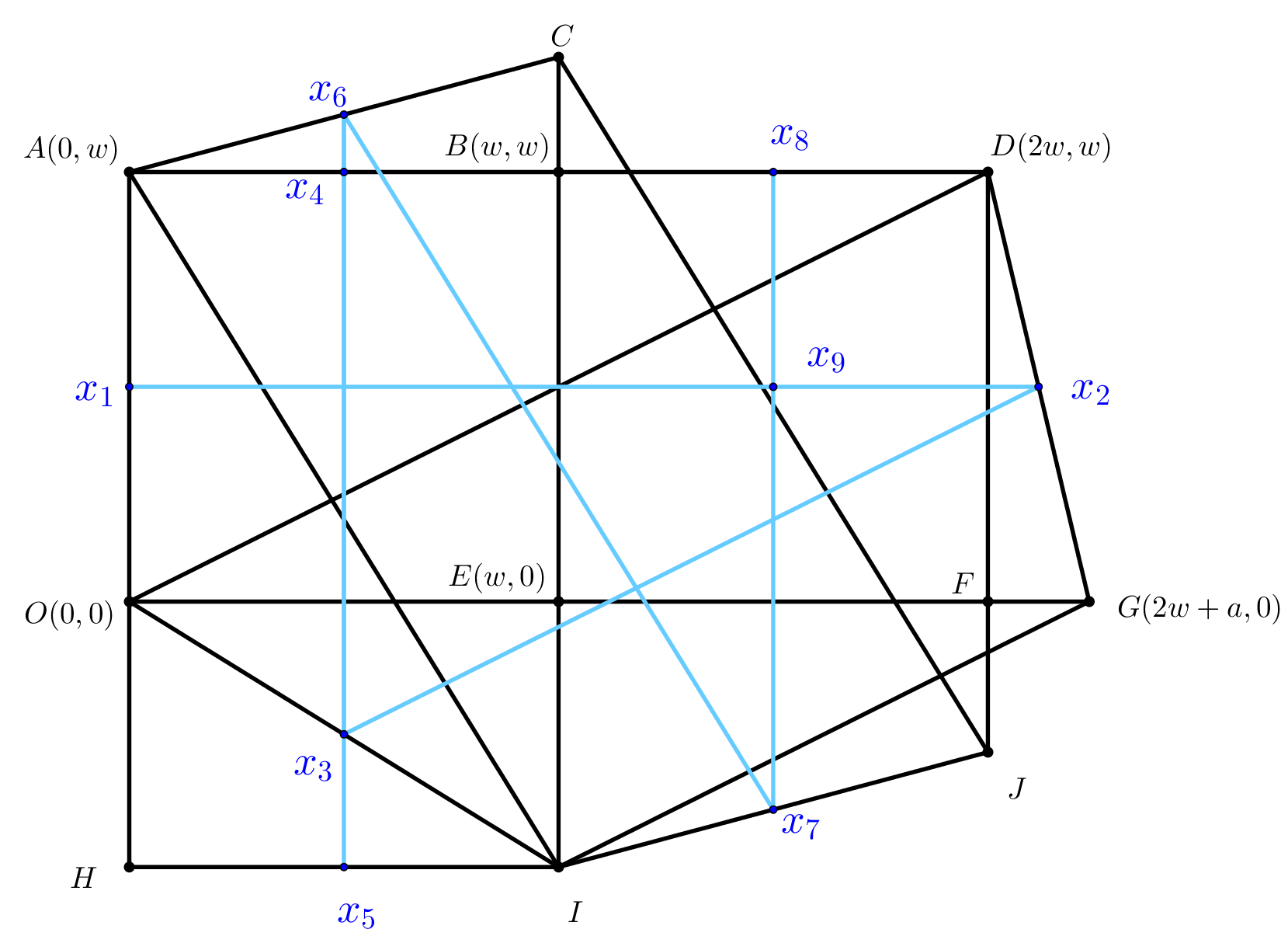}

\parbox{5.5in}{\caption{\em Flat knotted ribbon for $J(2, - n)$ when $n$ is a positive even integer. The line segments $x_1x_2, \ x_2x_3, \ \ldots , \ x_8x_9, \ x_9x_1$ correspond to the centerline of the ribbon.}}

\label{mainfigure}

\end{figure}

\noindent
{\em Proof of Theorem 4:}

To evaluate the ratio of length to width, we denote the ribbon width by $w$ and calculate the length of the ribbon along the center-line from the initial point $x_1$ to $x_8$, then to $x_9$, and finally to the initial point $x_1$. We first set up a coordinate system such that $O = (0,0), \ A = (0,w), \ B = (w,w), \ D = (2w,w),\  E = (w,0), \  F = (2w,0)$ (see Figure 8). We next calculate the coordinates of the rest of points in Figure 8.

Let $\theta_1$ be $\angle x_1x_2x_3$. Since the center-line $x_1x_2$ and ribbon edge $AD$ are parallel and the center-line $x_2x_3$ and ribbon edge $DO$ are also parallel, we have $\angle x_1x_2x_3 = \angle ADO$. Hence, $\tan \theta_1 = \tan (\angle ADO) = {1 \over 2}$.

Let $a = FG$ and thus point $G$ has coordinates $(2w+a, 0)$. We can calculate $a$ exactly as follows. Since $\angle OGD = \angle x_1x_2D = {\pi - \theta_1 \over 2}$, we have $\cot({\pi - \theta_1 \over 2}) = {a \over w}$ and hence $a = w \tan ({\theta_1 \over 2})$. In view of $\tan \theta_1 = {1 \over 2}$, we use the identity $\tan \theta_1 = 2 \tan ({\theta_1 \over 2}) / [1 - \tan^2 ({\theta_1 \over 2})]$ to obtain $\tan ({\theta_1 \over 2}) = \sqrt{5} - 2$ . Therefore, $a = (\sqrt{5} -2) w$.

Because $\angle EGI = \theta_1$, we have $\tan (\angle EGI) = \tan \theta_1 = {1 \over 2}$. Hence,  $IE = {EG \over 2} = {w+a \over 2}$. Points $H$ and $I$ have coordinates $(0, - {w +a \over 2})$ and $(w, - {w+a \over 2})$, respectively.

Let $\theta_2$ be $\angle x_5x_6x_7$. Then $\angle BAC = {\theta_2 \over 2}$, $BC = w\tan({\theta_2 \over 2})$, and point C has coordinates $(w, w+w\tan({\theta_2 \over 2}))$. The angle $\theta_2$ can also be calculated exactly, but its exact value is not needed for future calculations.

Let point $K$ be on line segment $EI$ such that $IK \perp KJ$. Since $\angle x_6x_7x_8 = \theta_2$, we have $\angle KJI = {\theta_2 \over 2}$ and thus $KI = w \tan ({\theta_2 \over 2})$. Hence, point J has coordinates ($2w$, $-\frac{w+a}{2} + w\tan({\theta_2 \over 2}))$.

The points $x_1, \ x_2, \ \ldots , \ x_8$ are the midpoints of the fold line segments and their coordinates can be calculated easily: $x_1 = (0, \frac{w}{2}), \  x_2 = (2w + \frac{a}{2}, \frac{w}{2})), \ x_3 = (\frac{w}{2}, -\frac{w+a}{4})$, \ $x_4 = (\frac{w}{2}, w), \ x_5 = (\frac{w}{2}, - \frac{w+a}{2}), \ x_6 = ({w \over 2}, w+\frac{1}{2}w\tan(\frac{\theta_2}{2})), \ x_7 = (\frac{3}{2}w, \frac{1}{2}w\tan(\frac{\theta_2}{2})-\frac{w + a}{2}),$ and $x_8 = (\frac{3}{2}w, w)$. The last fold line segment $DE$ is not shown in Figure 8 and its midpoint is $x_9 = ({3 w \over2}, {w \over 2})$.

For $n \ge4$, the center line of the ribbon starts at $x_1$,  goes to $x_2$, $x_3$, then travels along $x_4$ to $x_5$ or $x_5$ to $x_4$ exactly $(n-3)$ times, and finally proceeds to $x_6, \ x_7, \ x_8, \ x_9$ before closing up at the initial point $x_1$.
Thus the length of the ribbon is obtained by summing the lengths $x_1x_2, \ x_2x_3, \ x_3x_4$, \ $x_5x_6, \ x_6x_7, \ x_7x_8, \ x_8x_9, \ x_9x_1$ and $(n-3)$ multiples of $x_4x_5$.
\begin{eqnarray*}
\mbox{Length} &=& x_1x_2 + x_2x_3 + x_3x_4 + \left(n-3\right) x_4x_5 + x_5x_6 + x_6x_7 + x_7x_8 + x_8x_9 + x_9x_1 \\
&=& 2w + {a \over 2} + \sqrt{\left({3w + a \over 2}\right)^2 + \left({3w + a \over 4}\right)^2} + {5w + a \over 4} + \left(n-3\right) {3w + a \over 2} \\
&& + \ {w \tan\left({\theta_2 \over 2}\right) + 3w + a \over 2} + \sqrt{w^2 + \left({3w + a \over 2}\right)^2} \\
&& + \ {3w + a - w \tan \left( {\theta_2 \over 2} \right) \over 2}  + {w \over 2} + {3w \over 2} \\
&=& \left[6 + 2 \sqrt{5} + \sqrt{ { 5 + \sqrt{5} \over 2}} + \left(n-3\right) {1 + \sqrt{5} \over 2} \ \right] w \\
&=& \left[ \frac{\sqrt{5}+1}{2} \ n + {9 + \sqrt{5} \over 2} +\sqrt{5+\sqrt{5} \over 2} \ \ \right] w \ ,
\end{eqnarray*}
where we have used $a = (\sqrt{5} - 2) w$. This proves (\ref{even}) for even integer $n > 2$.

We now show that (\ref{even}) is also true for $n = 2$. Since $x_3x_6 = x_3x_4 + x_4x_5 + x_5x_6 - 2 x_4x_5$ (see Figure 8), the length of the ribbon when $n=2$ is equal to
\begin{eqnarray*}
\lefteqn{x_1x_2 + x_2x_3 + x_3x_6 + x_6x_7 + x_7x_8 + x_8x_9 + x_9x_1} \\
&=& x_1x_2 + x_2x_3 + x_3x_4 + (4-3) x_4x_5 + x_5x_6 + x_6x_7 + x_7x_8 + x_8x_9 + x_9x_1 
 - 2 x_4x_5 \\
&=& \left[ \frac{\sqrt{5}+1}{2} \times 4 + {9 + \sqrt{5} \over 2} +\sqrt{5+\sqrt{5} \over 2} \ \ \right] w - 2 \ \left[{\sqrt{5} + 1 \over 2}\right] w \\
&=& \left[ \frac{\sqrt{5}+1}{2} \times 2 + {9 + \sqrt{5} \over 2} +\sqrt{5+\sqrt{5} \over 2} \ \right] w \ ,
\end{eqnarray*}
where we have used (1) for $n=4$ in the second equality. This completes the proof of (\ref{even}).
\begin{figure}[!htbp]
\centering
\includegraphics[width=0.6\textwidth]{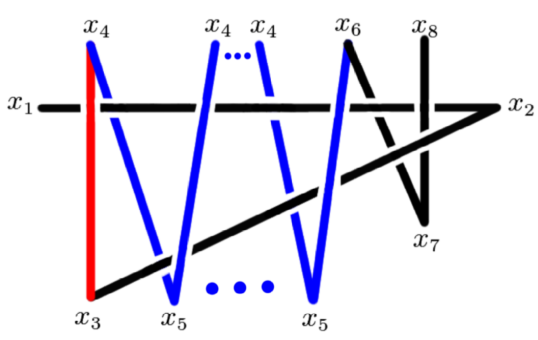}
\parbox{5.5in}{\caption{\em A slight distortion of the center-line of our modified flat knotted ribbon, in which the line segment $x_4x_5$ should overlap the line segment $x_3x_4$. For simplicity,  the linking of $x_1$ and $x_8$ is not shown in the figure. }}
\label{distortion2}

\end{figure}

We finally show how to prove (\ref{odd}) by modifying the flat knotted ribbon. For a positive even integer $n$, we construct the new flat knotted ribbon in a similar way, but we change all the  overpassings and underpassings except the last two passings when $x_7x_8$ still underpasses $x_2x_3$ and overpasses $x_1x_2$ (see Figure 6 for the old knot and Figure 9 for the new knot). 

The Reidemeister moves similar to those displayed in Figure 7 show that the new knot is $J(2, n)$, where the $n$ crossings are now right-handed. The calculations of the ratio of length to width for the old knot $J(2, -n)$ also work for the new knot $J(2, n)$. Hence, the ribbonlength $J(2, n)$ also has an upper bound (\ref{even}) when $n$ is a positive even integer. Since $J(2, m)$ is equivalent to $J(-2, m-1)$ and $J(-2, -m)$ is the mirror image of $J(2, m)$ \cite{HS}, we see that $J(2, n)$ is the mirror image of $J(2, -n +1)$. Consequently, the ribbonlength of the twist knot $J(2, - (n-1))$ has an upper bound 
$$\frac{\sqrt{5}+1}{2} \ n + {9 + \sqrt{5} \over 2} +\sqrt{5+\sqrt{5} \over 2} = \frac{\sqrt{5}+1}{2} \ (n - 1) +  5 + \sqrt{5} + \sqrt{5+\sqrt{5} \over 2} \ .$$
This proves (\ref{odd}) and the proof of Theorem 4 is completed. 
\hfill $\square$

\bigskip

\section{Conclusion and future directions}

We use grid diagrams to construct the flat knotted ribbons and prove a quadratic upper bound on the ribbonlength for every non-trivial knot. We improve the quadratic upper bound to a linear upper bound for every non-trivial torus knot and twist knot. We obtain a sharper linear upper bound for twist knots by constructing a flat knotted ribbon via folding the ribbon over itself multiple times.

Our approach of using grid diagrams to study flat knotted ribbons is novel and can likely be used to obtain a linear upper bound on the ribbonlength for more general knots. One possible direction for future research would be to use grid diagrams to prove a linear upper bound on the ribbonlength for more complicated families of knots such as rational knots. 

Another possible direction would be to improve Theorem 1 by studying a grid diagram of a knot $K$ in a $g(K) \times g(K)$ grid, where $g(K)$ is the grid index of $K$. If we can show that the length of the ribbon in such a grid diagram has an upper bound that is linear in $g(K)$,  then the ribbonlength, in view of $g(K) \leq c(K) + 2$, would have an upper bound that is linear in $c(K)$.

\section{Acknowledgments} 

I would like to thank my mentors Mr. Vishal Patil of MIT, Professor Sergei Chmutov of the Ohio State University, and Mr.  Daniel Vitek of Princeton University for their invaluable help and support.

I am grateful to Professor David Jerison of MIT, Dr. Tanya Khovanova of MIT,  Professor Ankur Moitra of MIT, and Professor John Rickert of Rose-Hulman Institute of Technology for their helpful comments and suggestions.

This research is supported in part by the Research Science Institute, PRIMES-USA, and the Department of Mathematics, MIT.
I would also like to acknowledge my sponsors Mr. William Madar, the chairman of the Nordson Corporation, Dr. Srinivasan Dasarthy, Professor Javidhya Dasarthy, Dr. Sivaswami Sivaraman, and Dr. Vidya Raman for their generous financial support.

\end{document}